\newtheorem{proposition}{Proposition}
\newtheorem{theorem}[proposition]{Theorem}
\newtheorem{corollary}[proposition]{Corollary}
\theoremstyle{definition}  %avec amsart
\newtheorem{definition}[proposition]{Definition}
\newtheorem{example}[proposition]{Example}
\newcommand{\thmref}[1]{Theorem~\ref{#1}}
\newcommand{\propref}[1]{Proposition~\ref{#1}}
\newcommand{\cororef}[1]{Corollary~\ref{#1}}
\newcommand{\examref}[1]{Example~\ref{#1}}
\newcommand{\secat}{{\rm secat}\,}
\newcommand{\cat}{{\rm cat}\,}
\newcommand{\Relcat}{{\rm Relcat}\,}
\newcommand{\relcat}{{\rm relcat}\,}
\newcommand{\relcatk}{{\rm relcat}_k\,}
\newcommand{\relcatun}{{\rm relcat}_1\,}
\newcommand{\hcat}{{\rm hcat}}
\newcommand{\Hcat}{{\rm Hcat}}
\newcommand{\Pushcat}{{\rm Pushcat}\,}
\newcommand{\ipu}{{i+1}}
\newcommand{\nmu}{{n-1}}
\newcommand{\imu}{{i-1}}
\newcommand{\id}{{\rm id}}
\newcommand{\pro}{{\rm pr}}
\renewcommand{\leq}{\leqslant}
\renewcommand{\geq}{\geqslant}
\title{Yet another Hopf invariant}
\author{Jean-Paul Doeraene and Mohammed El Haouari}
\subjclass[2010]{55M30}  %avec amsart
\keywords{Ganea fibration, sectional category, Hopf invariant.} %avec amsart
\begin{document}

\maketitle

\begin{abstract}
The classical Hopf invariant is defined for a map $f\colon S^r \to X$. Here we define `hcat' which is some kind of Hopf invariant built with a construction in Ganea's style, valid for maps not only on spheres but more generally on a `relative suspension' $f\colon \Sigma_A W \to X$.
We study the relation between this invariant and the sectional category and the relative category of a map. In particular, for $\iota_X\colon A\to X$ being the `restriction' of $f$ on $A$, we have $\relcat \iota_X \leq \hcat f \leq \relcat \iota_X +1$ and $\relcat f \leq \hcat f$.
\end{abstract}

Our aim here is to make clearer the link between the Lusternik-Schnirelmann category (cat), more generally the  `relative category' (relcat), closely related to James'sectional category (secat), and the Hopf invariants. In order to do this, we introduce a new integer, namely $\hcat$, that combines the Iwaze's version of Hopf invariant \cite{Iwa98}, based on the {\em difference up to homotopy between two maps} defined for a given section of a Ganea fibration, and the framework of the sectional and relative categories, searching for the {\em least integer} such that the Ganea fibration has a section, possibly with additional conditions. 
To do this combination, we simply define our invariant $\hcat$, as the least integer such that the Ganea fibration has a section $\sigma$ with additional condition that the corresponding two maps ($f\circ\sigma$ and $\omega_n$ in this paper) are homotopic.

It appears that for $f\colon S^r \to X$ or even for $f\colon \Sigma W \to X$,  we obtain an integer that can be either $\cat(X)$, or $\cat(X)+1$. More generally, for any $f\colon \Sigma_A W \to X$, we have $\relcat(f\circ\theta) \leq \hcat(f) \leq \relcat(f\circ\theta)+1$, where $\theta\colon A \to \Sigma_A W$ is the map arising in the construction of $\Sigma_A W$.

In section \ref{cofibre}, we study the influence of $\hcat$ in a homotopy pushout.
In section \ref{strong}, we introduce the `strong' version of our invariant, 
and we obtain another important inequality: for any $f\colon \Sigma_A W \to X$, we have $\relcat(f) \leq \hcat(f)$.
In section \ref{sigmabars}, we give alternative equivalent conditions to get $\hcat$.
Applications and examples are given.

\section{The Hopf category}

We work in the category of pointed topological spaces. All constructions are made up to homotopy. A `homotopy commutative diagram' has to be understood in the sense of \cite{Mat76}.

\smallskip
Recall the following construction:
\begin{definition}\label{ganea}
For any map $\iota_X\colon A \to X$,
the \emph{Ganea construction} of $\iota_X$
is the following sequence of homotopy commutative diagrams ($i \geq 0$):
$$\xymatrix{
&A\ar[dr]_{\alpha_{i+1}}\ar@/^/[rrrd]^{\iota_X}\\
F_i\ar[rd]_{\beta_i}\ar[ur]^{\eta_{i}}&&G_{i+1}\ar[rr]|-(.35){g_{i+1}}&&X\\
&G_i\ar[ru]^{\gamma_{i}}\ar@/_/[rrru]_{g_{i}}}$$
where the outside square is a homotopy pullback, the inside
square is a homotopy pushout and the map $g_{i+1} = (g_i,\iota_X) \colon G_{i+1} \to X$
is the whisker map induced by this homotopy pushout.
The iteration starts with  $g_0 = \iota_X \colon A \to X$.
We set $\alpha_0 = \id_A$.
\end{definition}

For any $i \geq 0$, there is a whisker map $\theta_i = (\id_A, \alpha_i)\colon A \to F_i$ induced by the homotopy pullback. Thus we have the sequence of maps $\xymatrix{A\ar[r]|(.4){\theta_i}&F_i\ar[r]|(.6){\eta_i}&A}$ and $\theta_i$ is a homotopy section of  $\eta_i$. Moreover we have $\gamma_i \circ \alpha_i \simeq \alpha_{i+1}$, thus also $\alpha_{i+1}\simeq \gamma_i \circ \gamma_{i-1} \circ \dots \circ \gamma_0$.
%Notice that $\eta_0 \simeq \beta_0$ if and only if $\iota_X$ is a homotopy monomorphism.

We denote by $\gamma_{i,j}\colon G_i \to G_j$ the composite $\gamma_{j-1} \circ \dots \circ \gamma_{i+1} \circ \gamma_i$ (for $i < j$) and set $\gamma_{i,i} = \id_{G_i}$.

Of course, everything in the Ganea construction depends on $\iota_X$. We sometimes denote $G_i$ by $G_i(\iota_X)$ to avoid ambiguity.

\begin{definition}\label{LSganea}
Let $\iota_X\colon A \to X$ be any map. 

1) The \emph{sectional category} of $\iota_X$ is the least integer $n$ such that the map $g_n\colon G_n(\iota_X)\to X$ has a homotopy section, i.e. there exists a map $\sigma\colon X \to G_n(\iota_X)$ such that $g_n \circ \sigma \simeq \id_X$.

2) The \emph{relative category} of $\iota_X$ is the least integer $n$ such that the map $g_n\colon G_n(\iota_X)\to X$ has a homotopy section $\sigma$ and $\sigma \circ \iota_X \simeq \alpha_n$.

3) The  \emph{relative category of order $k$} of $\iota_X$  is the least integer $n$ such that the map $g_n\colon G_n(\iota_X)\to X$ has a homotopy section $\sigma$ and $\sigma \circ g_k \simeq \gamma_{k,n}$.
\end{definition}

We denote the sectional category  by $\secat(\iota_X)$, the relative category by $\relcat(\iota_X)$, and the relative category of order $k$ by   $\relcatk(\iota_X)$.
If $A = \ast$,  $\secat(\iota_X) = \relcat(\iota_X)$ and is denoted simply by $\cat(X)$; this is the `normalized' version of the Lusternik-Schnirelmann category.

Clearly, $\secat(\iota_X) \leq \relcat(\iota_X)$. We have also  $\relcat(\iota_X)\leq \relcatun(\iota_X)$, see \propref{majorationhcat} below.

\smallskip
In the sequel, we will consider a given homotopy pushout:
$$\xymatrix{
W\ar[r]^\eta\ar[d]_\beta&A\ar[d]^\theta\\
A\ar[r]_(.4)\theta&\Sigma_A W
}$$
In other words, the map $\theta$ is a map such that $\Pushcat \theta \le 1$ in the sense of \cite{DoeHa13}.  We call this homotopy pushout a `relative suspension' because in some sense, $A$ plays the role of the point in the ordinary suspension.

We also consider any map $f\colon \Sigma_A W \to X$, and set $\iota_X = f\circ \theta$. 

We don't assume $\eta \simeq \beta$ in general. This is true, however, if $\theta$ is a homotopy monomorphism, and in this case we can  `think' of $\iota_X$ as the `restriction' of $f$ on $A$.

\smallskip
For $n \geq 1$, consider the following homotopy commutative diagram:
\[\begin{split}\xymatrix@R=1pc@C=1pc{
&W\ar[rr]^\beta\ar[ld]_\eta\ar@{-->}[dd]|\hole&&A\ar[ld]_(0.6)\theta\ar[dd]^(.7){\alpha_{n-1}}|\hole\ar[rd]^{\theta}\\
A\ar[rr]^(.65){\theta}\ar@{=}[dd]&&\Sigma_A W\ar@{.>}[dd]^(.3){\omega_n}\ar@{=}[rr]&&{\Sigma_A W}\ar[dd]^f\\
&F_{n-1}(\iota_X)\ar[dl]\ar[rr]|!{[ur];[dr]}\hole && G_{n-1}(\iota_X)\ar[dl]^(.4){\gamma_{n-1}}\ar[dr]^(.55){g_{n-1}}\\
A\ar[rr]_{\alpha_n}&&G_n(\iota_X)\ar[rr]_{g_n}&&X
}\end{split}\label{diag1}\tag{\dag}\]
where the map $W\to F_{n-1}$ is induced by the bottom outer homotopy pullback and the map $\omega_n\colon \Sigma_A W\to G_n$ is induced by the top inner homotopy pushout.
We have $f \simeq g_n\circ \omega_n$ by the `Whiskers maps inside a cube' lemma (see \cite{DoeHa13}, Lemma 49). Also notice that $\alpha_n \simeq \omega_n \circ \theta \simeq \gamma_\nmu \circ \alpha_\nmu $;
so $\omega_n \simeq (\alpha_n, \alpha_n)$ is the whisker map of two copies of $\alpha_n$ induced by the homotopy pushout $\Sigma_AW$.
Finally, for all $k \geq 1$, we can see that $\omega_n \simeq \gamma_{k,n} \circ \omega_k$.

\begin{definition} The {\em Hopf category} of $f$ is the least integer $n \geq 1$ such that  $g_n \colon G_n(\iota_X) \to X$ has a homotopy section $\sigma\colon X\to G_n(\iota_X)$ such that $\sigma \circ f \simeq \omega_n$.
\end{definition}

We denote this integer by $\hcat(f)$. 

Actually, speaking of  `Hopf category of $f$' is a misuse of language. We should speak of `Hopf category of the datas $\eta$, $\beta$ and $f$'.

\begin{example}
Let $X = \Sigma_A W$ and $f\simeq \id_X$. Then, as might be expected,  $\hcat(f) = 1$. Indeed, in this case, as $g_1 \circ \omega_1 \simeq f \simeq \id_X$, $\omega_1$ is a homotopy section of $g_1$. Moreover, $\omega_1 \circ f \simeq \omega_1 \circ \id_X \simeq \omega_1$, so $\hcat(f) = 1$.
\end{example}

\begin{example}\label{flechenulle}
Let $X \not\simeq *$ and $W = A \vee A$,  $\beta \simeq \pro_1 : A \vee A \to A$ and $\eta \simeq \pro_2 : A \vee A \to A$ the obvious maps. Then $\Sigma_A W \simeq *$ and we have no choice for $f$ that must be the null map $f\colon * \to X$. In this case the condition $\sigma \circ f \simeq \omega_n$ is always satisfied, so $\hcat(f) = \secat(\iota_X) = \cat(X)$.
\end{example}

Notice that $\relcat$ is a particular case of $\hcat$: When $W = A$, $\eta \simeq \beta \simeq \id_A$, then $\iota_X \simeq f$, $\omega_n \simeq \alpha_n$ and $\hcat(f) = \relcat(\iota_X)$.
Also $\relcatun$ is a particular case of $\hcat$: When $W = F_0$, then  $\Sigma_A W \simeq G_1$, $\theta \simeq \gamma_0 \simeq \alpha_1$, and if, moreover, $f \simeq g_1$, then $\omega_n \simeq \gamma_{1,n}$ and $\hcat(f) = \relcatun(\iota_X)$.

The following proposition shows that these particular cases are in fact lower and upper bounds for $\hcat(f)$.

\begin{proposition}\label{majorationhcat}
Whatever can be $f$ (and $\iota_X = f \circ \theta$), we have 
$$\secat(f) \leq \relcat(\iota_X) \leq \hcat(f) \leq \relcatun(\iota_X) \leq \relcat(\iota_X) +1.$$
\end{proposition}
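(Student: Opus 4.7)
The proposition consists of four inequalities; I would treat them out of order, starting with those that are nearly tautological and saving the harder compatibility argument for last. The middle two inequalities follow by direct bookkeeping using the identities for $\omega_n$ recorded after diagram (\dag): $f \simeq g_n \circ \omega_n$, $\omega_n \circ \theta \simeq \alpha_n$, and $\omega_n \simeq \gamma_{1,n} \circ \omega_1$. For $\relcat(\iota_X) \leq \hcat(f)$, if $\sigma\colon X \to G_n(\iota_X)$ is a section of $g_n$ realizing $\hcat(f) \leq n$, i.e.\ $\sigma \circ f \simeq \omega_n$, then $\sigma \circ \iota_X = \sigma \circ f \circ \theta \simeq \omega_n \circ \theta \simeq \alpha_n$, so $\sigma$ also witnesses $\relcat(\iota_X) \leq n$. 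For $\hcat(f) \leq \relcatun(\iota_X)$, if $\sigma$ is a section realizing $\relcatun(\iota_X) \leq n$, so that $\sigma \circ g_1 \simeq \gamma_{1,n}$, then $\sigma \circ f \simeq \sigma \circ g_1 \circ \omega_1 \simeq \gamma_{1,n} \circ \omega_1 \simeq \omega_n$, so the same $\sigma$ witnesses $\hcat(f) \leq n$.

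For the leftmost inequality $\secat(f) \leq \relcat(\iota_X)$, I would invoke the functoriality of the Ganea construction with respect to the factorization $\iota_X = f \circ \theta$. Concretely, one builds inductively a ladder of comparison maps $\phi_n\colon G_n(\iota_X) \to G_n(f)$ over $X$, starting from $\phi_0 = \theta$ and, at each step, using $\phi_n$ together with $\theta$ to induce a map between the homotopy fibres of the two Ganea constructions (by the universal property of the homotopy pullback), then extending across the next pushout to obtain $\phi_{n+1}$. Any section $\sigma$ of $g_n$ for $\iota_X$ then yields a section $\phi_n \circ \sigma$ of $g_n$ for $f$, so $\secat(f) \leq \secat(\iota_X) \leq \relcat(\iota_X)$.

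The main obstacle is the rightmost inequality $\relcatun(\iota_X) \leq \relcat(\iota_X) + 1$. Starting from a section $\sigma\colon X \to G_n(\iota_X)$ with $\sigma \circ \iota_X \simeq \alpha_n$ (where $n = \relcat(\iota_X)$), the natural candidate at the next level is $\sigma' = \gamma_n \circ \sigma$, which is clearly a section of $g_{n+1}$ since $g_{n+1} \circ \gamma_n \simeq g_n$. What must be verified is $\sigma' \circ g_1 \simeq \gamma_{1,n+1}$, or equivalently $\gamma_n \circ \sigma \circ g_1 \simeq \gamma_n \circ \gamma_{1,n}$ as maps $G_1 \to G_{n+1}$. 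Viewing $G_1$ as the homotopy pushout of $\eta_0, \beta_0\colon F_0 \to A$, both sides already agree on each copy of $A$ (both restrict to $\alpha_{n+1}$), so only the compatibility on $F_0$ is at issue. This compatibility must be extracted from the pushout defining $G_{n+1}$, in which $\gamma_n \circ \beta_n$ is canonically homotopic to $\alpha_{n+1} \circ \eta_n$; this Hopf-type cancellation is exactly where the extra level $n+1$ is spent, and is the real technical point of the proposition.
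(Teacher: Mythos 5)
Your treatment of the two middle inequalities is correct and is exactly the paper's argument: for $\relcat(\iota_X)\leq\hcat(f)$ you compose a witnessing section with $\theta$ and use $\omega_n\circ\theta\simeq\alpha_n$, and for $\hcat(f)\leq\relcatun(\iota_X)$ you use $f\simeq g_1\circ\omega_1$ and $\omega_n\simeq\gamma_{1,n}\circ\omega_1$. For the leftmost inequality the paper simply cites $\secat(f)\leq\secat(\iota_X)$ from an earlier reference; your inductive construction of comparison maps $\phi_n\colon G_n(\iota_X)\to G_n(f)$ over $X$ starting from $\phi_0=\theta$ is a correct way to supply that citation, so no complaint there.

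The genuine gap is in the rightmost inequality $\relcatun(\iota_X)\leq\relcat(\iota_X)+1$. You propose $\sigma'=\gamma_n\circ\sigma$, observe that $\sigma'\circ g_1$ and $\gamma_{1,n+1}$ agree (up to homotopy) on the two copies of $A$ whose pushout over $F_0$ is $G_1$, and then state that the remaining ``compatibility on $F_0$'' is ``the real technical point.'' That is precisely the point at which nothing has been proved: two maps out of a homotopy pushout that agree on the two legs need not be homotopic --- one must exhibit a single homotopy over $F_0$ making the two restrictions cohere, and the difference of the two given homotopies is exactly a Hopf-invariant-type obstruction of the sort this whole paper is about. It is not established that this obstruction vanishes for the naive candidate $\gamma_n\circ\sigma$; the known proof (which the paper outsources to \cite{Doe15} rather than reproducing) requires a more careful construction of the section at level $n+1$ using the join/pullback structure of $F_n$ and $G_{n+1}$, not merely the observation that the extra level ``is where the cancellation happens.'' As written, your argument for this inequality is a statement of the problem rather than a solution to it, so the chain of inequalities is not closed.
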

%\begin{proof}
%If $\hcat(f) \leq n$, then $\sigma \circ \iota_X \simeq \sigma \circ f \circ \theta \simeq \omega_n \circ \theta \simeq \alpha_n$, so $\relcat(\iota_X) \leq n$. This proves the first inequality.

%For the second inequality, assume that $\secat X =n$ et let $\sigma\colon X \to G_n$ a homotopy section of $g_n\colon G_n \to X$. Build the following homotopy commutative diagram:
%$$\xymatrix{
%\Sigma_A X\ar[d]_{\omega_1}\ar[rr]^{\bar\sigma}&& H\ar[d]\ar[rr]^{\bar g} && \Sigma_A X\ar[d]^{\omega_1}\\
%G_1\ar[dd]_{g_1}\ar[rr]^{\sigma'}&& F\ar[dd]\ar[rr]^{g'} && G_1\ar[dd]^{g_1}\ar[ld]_{\gamma_{1,n+2}}\\
%&&&G_{n+2}\ar@{-->}[rd]^{g_{n+2}}\\
%X\ar[rr]_\sigma&&G_n\ar[rr]_{g_n}\ar[ru]_{\gamma_{n,n+2}}&&X
%}$$
%where the four squares are homotopy pullbacks and the upper triangle inside the fourth square is a homotopy pushout. The whisker map $(g_n,g_1)$ induced by this homotopy pushout is $g_{n+2}$ because of the associativity of the join over $X$  (see \cite{Doe98}, Theorem 4.8 for instance).
%We have $g' \circ \sigma' \simeq \id_{G_1}$ and $\bar g \circ \bar\sigma \simeq \id_{\Sigma_A X}$ by the Prism lemma (see \cite{Doe98}, Lemma 1.3 for instance). The map $\sigma^+ = \gamma_{n,n+2} \circ \sigma$ is a homotopy section of $g_{n+2}$ and, moreover, $\sigma^+ \circ f \simeq \gamma_{n,n+2} \circ \sigma \circ f \simeq  \gamma_{n,n+2} \circ \sigma \circ g_1 \circ \omega_1 \simeq \gamma_{1,n+2} \circ \omega_1 \simeq  \omega_{n+2}$. So $\hcat f \leq n+2$.
%\end{proof}
\begin{proof}Consider the following homotopy commutative diagram ($n\geq 1$):
$$\xymatrix@R=1pc@C=1pc{%
&&&G_n\ar@/^/[dd]^{g_n}\\
A\ar[rr]|-{\theta}\ar@/^/[rrru]^{\alpha_n}\ar@/_/[rrrd]_{\iota_X}&&\Sigma_A W\ar[rd]^{f}\ar[ur]_{\omega_n}\\
&&&X
}$$
We see that if there is a map $\sigma\colon X \to G_n$ such that $\omega_n \simeq \sigma \circ f$ then $\alpha_n \simeq \sigma \circ \iota_X$ and this proves the second inequality.

Now consider the following homotopy commutative diagram ($n\geq 1$):
$$\xymatrix@R=1pc@C=1pc{%
&&&G_n\ar@/^/[dd]^{g_n}\\
\Sigma_A W\ar[rr]|-{\omega_1}\ar@/^/[rrru]^{\omega_n}\ar@/_/[rrrd]_{f}&&G_1\ar[rd]^{g_1}\ar[ur]_{\gamma_{1,n}}\\
&&&X
}$$
We see that if there is a map $\sigma\colon X \to G_n$ such that $\gamma_{1,n} \simeq \sigma \circ g_1$ then $\omega_n \simeq \sigma \circ f$  and this proves the third inequality.

The first inequality comes from $\secat(f)  \leq \secat(\iota_X)  \leq \relcat(\iota_X)$, the first of these two inequalities comes from \cite{DoeHa13}, Proposition 29.

Finally, the fourth inequality is proved in \cite{Doe15}.
\end{proof}

So $\hcat(f)$ establishes a  `dichotomy' between maps $f\colon \Sigma_A W \to X$: 
\begin{itemize}
\item Either $\hcat(f) = \relcat(\iota_X)$ and we have a $\sigma$ such that $f\circ \sigma \simeq \omega_n$ 
already for $n = \secat(\iota_X)$;
\item either $\hcat(f) = \relcat(\iota_X)+1$ and we have a $\sigma$ such that $f\circ \sigma \simeq \omega_n$
only for $n > \secat(\iota_X)$
\end{itemize}

\smallskip
Our last example of the section shows that the inequalities of \propref{majorationhcat} can be strict, and even that two may be strict at the same time:
\begin{example}Let $X = *$, $A \not\simeq*$ and consider $\iota_* \colon A \to *$. We have $G_i(\iota_*) \simeq A \bowtie \dots \bowtie A$, the join of $\ipu$ copies of $A$.  For any $k$, $\gamma_{k,k} \simeq \id$, so it cannot factorize through $*$; but $\gamma_{k,k+1}$ is homotopic to the null map, so $\relcatk(\iota_*) = k+1$. Now consider $f \simeq g_1(\iota_*)\colon A \bowtie A \to *$. As said before, in this case we have $\hcat(f) = \relcatun(\iota_X)$. So we get
$\secat(f) = 0 < \relcat(\iota_*) = 1 < \hcat(f) = \relcatun(\iota_*) = 2$.
\end{example}

%%%%%%%%%%%%%%%%%%%%%%%%%%%%%%%%%%%%%%%%%%%%%%%%

\section{Hopf invariant and homotopy pushout}\label{cofibre}

%In this section we assume that $\eta \simeq \beta$. So here we can define the map $\tilde{w} = (\id_A, \id_A)\colon \Sigma_A W \to A$, i.e. $\rho$ is the whisker map of two copies of $\id_A$ induced by the homotopy pushout. Notice that  $\rho$ is a homotopy retraction of $\theta$. (Warning: $f \not\simeq g_0 \circ \rho$.)

Let us consider any homotopy commutative square:
\[\begin{split}
\xymatrix{
\Sigma_A W\ar[r]^(.6){\rho}\ar[d]_{f}&B\ar[d]^{\kappa_Y}\\
X\ar[r]_\chi&Y
}\end{split}\label{relcof}\tag{\ddag}
\]

%\[\begin{split}
%\xymatrix{
%A\ar[r]^(.4){\theta}\ar[rd]_{\iota_X}&\Sigma_A W\ar[r]^(.6){\rho}\ar[d]^{f}&B\ar[d]^{\kappa_Y}\\
%&X\ar[r]_\chi&Y
%}\end{split}
%\]

\begin{proposition}
The homotopy commutative square above can be splitted into the following homotopy commutative diagram:
$$\xymatrix{
\Sigma_A W\ar[r]\ar[d]_{\rho}\ar@/^1pc/[rrr]^f&G_1(\iota_X)\ar[r]\ar[d]&G_n(\iota_X)\ar[d]\ar[r]&X\ar[d]^\chi\\
B\ar[r]\ar@/_1pc/[rrr]_{\kappa_Y}&G_1(\kappa_Y)\ar[r]&G_n(\kappa_Y)\ar[r]&Y
}$$
\end{proposition}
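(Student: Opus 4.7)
The plan is to pre-compose the square $(\ddag)$ with $\theta\colon A \to \Sigma_A W$, obtain a square with $\iota_X$ and $\kappa_Y$ as its vertical sides, and then invoke functoriality of the Ganea construction. Since $\iota_X = f\circ\theta$, the relation $\chi\circ f\simeq\kappa_Y\circ\rho$ from $(\ddag)$ yields $\chi\circ\iota_X\simeq\kappa_Y\circ(\rho\circ\theta)$; this produces the auxiliary square with top map $\rho\circ\theta\colon A\to B$ and bottom map $\chi\colon X\to Y$.

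Next, I would build inductively a ladder of induced maps $\psi_n\colon G_n(\iota_X)\to G_n(\kappa_Y)$ for $n\geq 0$, with $\psi_0 = \rho\circ\theta$, by using at each level the universal property of the homotopy pullback defining $F_n$ and the universal property of the homotopy pushout defining $G_{n+1}$. By construction these $\psi_n$ satisfy $\psi_{n+1}\circ\gamma_n\simeq\gamma_n\circ\psi_n$ and $g_n\circ\psi_n\simeq\chi\circ g_n$. Taking $\psi_1$ and $\psi_n$ as the middle two vertical arrows of the claimed diagram, the rightmost square is precisely the second relation, and the middle square follows by iterating the first. The top outer composite computes as $g_n\circ\gamma_{1,n}\circ\omega_1\simeq g_n\circ\omega_n\simeq f$, and the bottom outer composite as $g_n\circ\gamma_{1,n}\circ\gamma_0\simeq g_n\circ\gamma_{0,n}\simeq g_0 = \kappa_Y$, matching the two curved arrows.

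The main obstacle will be the leftmost square, namely $\psi_1\circ\omega_1\simeq\gamma_0\circ\rho$, in which $\gamma_0$ on the right side belongs to $\kappa_Y$'s Ganea construction while $\omega_1$ is specific to the suspension structure on $\Sigma_A W$. My plan is to exploit the pushout presentation $\Sigma_A W = A\cup^h_W A$: restricting along either copy of $\theta$, the identity $\omega_1\circ\theta\simeq\alpha_1\simeq\gamma_0$ (for $\iota_X$) together with the naturality $\psi_1\circ\gamma_0\simeq\gamma_0\circ\psi_0 = \gamma_0\circ\rho\circ\theta$ shows that $\psi_1\circ\omega_1$ and $\gamma_0\circ\rho$ agree after pre-composition with $\theta$. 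Since a map out of $\Sigma_A W$ is determined, up to homotopy, by its restrictions along the two copies of $\theta$ together with a compatible homotopy on $W$, the remaining task is to match the $W$-homotopies. This I would carry out by tracing through the pushout and pullback universal properties used in the construction of $\omega_1$ in diagram $(\dag)$ and in the definition of $\psi_1$; both ultimately reduce to the naturality of the induced map $F_0(\iota_X)\to F_0(\kappa_Y)$ evaluated on the map $W\to F_0(\iota_X)$ arising from the outer pullback in $(\dag)$.
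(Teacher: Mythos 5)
Your proposal is correct and takes essentially the same route as the paper: precompose $(\ddag)$ with $\theta$, build the ladder $G_i(\iota_X)\to G_i(\kappa_Y)$ inductively via the pullback/pushout whisker maps, and settle the leftmost square by exploiting the pushout presentation of $\Sigma_A W$ together with the induced map $F_0(\iota_X)\to F_0(\kappa_Y)$ carrying the $W$-homotopy. The paper packages that last verification as a single ``whisker maps inside a cube'' application to a three-dimensional diagram that keeps $W$ as a corner throughout the induction, which is exactly the bookkeeping your final paragraph describes.
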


\begin{proof}Set $\phi = \rho\circ\theta$. Since $\theta\circ\eta \simeq \theta\circ\beta$, also $\phi\circ\eta \simeq \phi\circ\beta$. 
First notice that we can insert the original homotopy square inside the following homotopy commutative diagram:
\[\xymatrix@R=1pc@C=1pc{
&W\ar[rr]^\eta\ar[ld]_\beta\ar[dd]|\hole&&A\ar[ld]\ar[dd]^(.7)\phi|\hole\ar[rd]^{\iota_X}\\
A\ar[rr]\ar[dd]_\phi&&\Sigma_A W\ar[dd]^(.3){\rho}\ar[rr]_(.4)f&&X\ar[dd]^{\chi}\\
&B\ar@{=}[dl]\ar@{=}[rr]|!{[ur];[dr]}\hole && B\ar@{=}[dl]\ar[dr]\\
B\ar@{=}[rr]&&B\ar[rr]_{\kappa_Y}&&Y
}\]

By induction on $n \geq 1$, starting from the outside cube of the above diagram and $\phi_0 = \phi$, we can build a homotopy diagram:
\[\begin{split}\xymatrix@R=1pc@C=1pc{
W\ar[dd]\ar[rr]\ar[rd]&&F_{n-1}(\iota_X)\ar[rr]\ar[ld]\ar@{-->}[dd]|\hole&&A\ar[ld]\ar[dd]^(.7)\phi|\hole\ar[rd]^{\iota_X}\\
&G_{n-1}(\iota_X)\ar[rr]\ar[dd]^(0.3){\phi_{n-1}}&&G_n(\iota_X)\ar@{.>}[dd]^(0.3){\phi_n}\ar[rr]&&X\ar[dd]^{\chi}\\
B \ar[rd]\ar[rr]|(0.45)\hole&&F_{n-1}(\kappa_Y)\ar[dl]\ar[rr]|!{[ur];[dr]}\hole&& B\ar[dl]\ar[dr]_(.45){\kappa_Y}\\
&G_{n-1}(\kappa_Y)\ar[rr]&&G_n(\kappa_Y)\ar[rr]_{g_n}&&Y
}\end{split}
\]
where the dashed and dotted maps are induced by the homotopy pullback $F_{n-1}(\kappa_Y)$ and the homotopy pushout $G_n(\iota_X)$ respectively.

So we obtain a homotopy commutative diagram:
\[\xymatrix@R=1pc@C=1pc{
&W\ar[rr]\ar[ld]\ar[dd]|\hole&&A\ar[ld]\ar[dd]|\hole\ar[rd]^{\iota_X}\\
A\ar[rr]\ar[dd]&&G_n(\iota_X)\ar@{.>}[dd]\ar[rr]_(.4){g_n}&&X\ar[dd]^{\chi}\\
&B\ar@{=}[dl]\ar@{=}[rr]|!{[ur];[dr]}\hole && B\ar[dl]\ar[dr]\\
B\ar[rr]&&G_n(\kappa_Y)\ar[rr]_{g_n}&&Y
}\]

Finally take the homotopy pushout inside the upper and lower lefter squares to get the homotopy commutative diagram:
$$\xymatrix{
\Sigma_A W\ar[d]_{\rho}\ar[r]_{\omega_n}&G_n(\iota_X)\ar[d]\ar[r]&X\ar[d]^\chi\\
B\ar[r]&G_n(\kappa_Y)\ar[r]&Y
}$$
and this gives the required splitting of the original square.
\end{proof}

%We already know the $\relcat(\kappa_Y) \leq \relcat(\iota_X) + 1$. 
%The following proposition gives a sufficient condition to have $\relcat(\kappa_Y) \leq \relcat(\iota_X)$.

\begin{proposition}\label{relcatstable} If the square \ref{relcof} is a homotopy pushout, then
 $$\relcat(\kappa_Y) \leq \hcat(f).$$
\end{proposition}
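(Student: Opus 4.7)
The plan is to use the splitting from the previous proposition to transport a $\hcat$-section across the homotopy pushout \ref{relcof}. Set $n = \hcat(f)$ and fix a homotopy section $\sigma\colon X \to G_n(\iota_X)$ of $g_n$ satisfying $\sigma\circ f \simeq \omega_n$. Applying the previous proposition to the square \ref{relcof} yields a map $\pi\colon G_n(\iota_X) \to G_n(\kappa_Y)$ whose left splitting square reads
\[
\xymatrix{
\Sigma_A W\ar[r]^{\omega_n}\ar[d]_{\rho} & G_n(\iota_X)\ar[d]^{\pi}\\
B\ar[r]_{\alpha_n} & G_n(\kappa_Y)
}
\]
and whose right square gives $g_n\circ\pi \simeq \chi\circ g_n$, where $\alpha_n\colon B\to G_n(\kappa_Y)$ denotes the map from the Ganea construction of $\kappa_Y$.

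I would then use the homotopy pushout hypothesis to produce a whisker map $\tau = (\pi\sigma,\alpha_n)\colon Y \to G_n(\kappa_Y)$. The compatibility required on $\Sigma_A W$ is
\[
\pi\sigma\circ f \;\simeq\; \pi\circ\omega_n \;\simeq\; \alpha_n\circ\rho,
\]
which chains the $\hcat$-identity $\sigma f \simeq \omega_n$ with the left splitting square above. By the universal property of the homotopy pushout, $\tau$ then satisfies $\tau\circ\chi \simeq \pi\sigma$ and $\tau\circ\kappa_Y \simeq \alpha_n$; the second identity is exactly the condition demanded by $\relcat$.

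To finish I would verify that $\tau$ is a homotopy section of $g_n\colon G_n(\kappa_Y) \to Y$. Postcomposing with the two legs of the pushout gives
\[
g_n\circ\tau\circ\chi \simeq g_n\circ\pi\circ\sigma \simeq \chi\circ g_n\circ\sigma \simeq \chi,
\qquad
g_n\circ\tau\circ\kappa_Y \simeq g_n\circ\alpha_n \simeq \kappa_Y,
\]
which match the two legs of $\id_Y$ through the pushout \ref{relcof}. The uniqueness clause of the homotopy pushout's universal property then forces $g_n\circ\tau \simeq \id_Y$, and combined with $\tau\circ\kappa_Y \simeq \alpha_n$ this shows $\relcat(\kappa_Y)\leq n = \hcat(f)$.

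The main technical obstacle is the coherence bookkeeping when invoking the homotopy pushout. The homotopy $\pi\sigma f\simeq \alpha_n\rho$ needed to produce $\tau$ must be assembled from the two pieces $\sigma f\simeq \omega_n$ and $\pi\omega_n\simeq \alpha_n\rho$ in a way compatible with the homotopy data of \ref{relcof}, and the uniqueness argument requires the two partial homotopies $g_n\tau\chi\simeq \chi$ and $g_n\tau\kappa_Y\simeq \kappa_Y$ to glue coherently. These verifications are routine within the homotopy-commutative framework of \cite{Mat76} and \cite{DoeHa13}, but they form the only delicate part of the argument.
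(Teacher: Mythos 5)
Your argument is correct in substance and close in spirit to the paper's, but it is organized differently. The paper does not reuse the splitting proposition: it introduces an auxiliary homotopy pushout $S = G_n(\iota_X)\cup_{\Sigma_A W} B$ (over $\omega_n$ and $\rho$), produces whisker maps $b\colon S\to G_n(\kappa_Y)$ and $d\colon Y\to S$ (the latter using $\sigma\circ f\simeq\omega_n$ exactly as you do), and takes $\sigma'=b\circ d$. You instead pull the map $\pi=\phi_n\colon G_n(\iota_X)\to G_n(\kappa_Y)$ out of the previous proposition and build the section $\tau=(\pi\sigma,\alpha_n)$ in one step as a whisker map out of $Y$ itself; this is a legitimate shortcut and arguably cleaner, since the compatibility $\pi\sigma f\simeq\pi\omega_n\simeq\alpha_n\rho$ and the relative condition $\tau\circ\kappa_Y\simeq\alpha_n$ come out immediately.

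The one step you should be careful with is the final appeal to ``the uniqueness clause of the homotopy pushout's universal property'' to conclude $g_n\circ\tau\simeq\id_Y$. A homotopy pushout has no such uniqueness clause: two maps out of $Y$ that agree up to homotopy on both legs need not be homotopic (the ambiguity is measured by the difference of the two homotopies on $\Sigma_A W$). The correct way to close this step --- and what the paper implicitly does when it writes $c\circ d\simeq\id_Y$ --- is to invoke the ``Whisker maps inside a cube'' lemma (\cite{DoeHa13}, Lemma~49): stacking the cube that defines $\tau$ on top of the cube whose vertical maps are $g_n$, $\id_{\Sigma_AW}$, $\id_B$ exhibits $g_n\circ\tau$ as the whisker map of the composite data $(g_n\pi\sigma,\;g_n\alpha_n)\simeq(\chi,\kappa_Y)$ with the canonical homotopy, and that whisker map is $\id_Y$. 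You flag this coherence issue yourself, so the gap is one of phrasing rather than of substance, but as written the uniqueness claim is not a valid property of homotopy pushouts and must be replaced by the cube-composition argument.
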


As a particular case, when $B \simeq *$, $Y$ is the homotopy cofibre of $f$, and $\relcat(\kappa_Y) = \cat(Y)$. So the Proposition asserts that $\hcat(f) \geq \cat(Y)$.

\begin{proof}Let $\hcat(f) \leq n$,  so we have a homotopy section $\sigma$ of $g_n(\iota_X)$ such that $\sigma\circ f \simeq \omega_n$.
First apply the `Whisker maps inside a cube' lemma to the outer part of the following homotopy commutative diagram:
$$\xymatrix@R=1pc@C=1pc{
&\Sigma_A W\ar[ld]_{\omega_n}\ar[rr]\ar@{=}[dd]|!{[ld];[rd]}\hole&&B\ar[ld]_a\ar[rd]^{\alpha_n}\ar@{=}[dd]|\hole&\\
G_n(\iota_X)\ar[rr]\ar[dd]&&S\ar@{.>}[dd]_(.3){c}\ar@{.>}[rr]_(.25)b&&G_n(\kappa_Y)\ar[dd]^{g_n}\\
&\Sigma_A W\ar[rr]|!{[ld];[rd]}\hole\ar[ld]&&B\ar[ld]\ar[rd]&\\
X\ar[rr]&&Y\ar@{=}[rr]&&Y
}$$
where the inner horizontal squares are homotopy pushouts, and $c$ and $b$ are the whisker maps induced by the homotopy pushout $S$. 
Next build the following homotopy commutative diagram:
$$\xymatrix@R=1pc@C=1pc{
&\Sigma_A W\ar[ld]_{f}\ar[rr]\ar@{=}[dd]|!{[ld];[rd]}\hole&&B\ar[ld]_(.6){\kappa_Y}\ar@{=}[dd]&\\
X\ar[rr]\ar[dd]_\sigma&&Y\ar@{.>}[dd]_(0.3)d\\
&\Sigma_A W\ar[rr]|!{[ld];[rd]}\hole\ar[ld]_{\omega_n}&&B\ar[ld]^a\ar[rd]^{\alpha_n}&\\
G_n(\iota_X)\ar[rr]&&S\ar[rr]_b&&G_n(\kappa_Y)
}$$
where $d$ is  the whisker map induced by the homotopy pushout $Y$.
Let $\sigma' = b\circ d$. We have $g_n \circ \sigma' \simeq g_n \circ b \circ d \simeq c \circ d \simeq \id_C$ and
$\sigma' \circ \kappa_Y \simeq b\circ d\circ \kappa_Y \simeq b\circ a \simeq \alpha_n$.
\end{proof}

\begin{corollary}\label{relcatinstable}In the diagram \ref{relcof}, if $\relcat(\kappa_Y) = \relcat(\iota_X) +1$, then $\hcat(f) = \relcat(\iota_X) + 1$.
\end{corollary}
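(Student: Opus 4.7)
The plan is to observe that this corollary is an immediate squeeze from the two main bounds we already have in hand, so essentially no new construction is needed.

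First I would invoke \propref{relcatstable}, applied to the hypothesis that the square \ref{relcof} is (implicitly here) a homotopy pushout, to get the lower bound
\[
\relcat(\iota_X) + 1 \;=\; \relcat(\kappa_Y) \;\leq\; \hcat(f),
\]
where the equality is the hypothesis of the corollary. Then I would invoke \propref{majorationhcat} to get the upper bound
\[
\hcat(f) \;\leq\; \relcat(\iota_X) + 1.
\]
Combining these two inequalities forces equality, so $\hcat(f) = \relcat(\iota_X) + 1$, as required.

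The only subtle point — and what one should check — is that the statement of \cororef{relcatinstable} implicitly assumes the square \ref{relcof} is a homotopy pushout (otherwise \propref{relcatstable} does not apply). Assuming this reading is correct, there is no real obstacle: the proof is simply a two-line application of the previous proposition together with the general upper bound from \propref{majorationhcat}. So the corollary should be stated as one sentence of derivation, without any diagram-chasing.
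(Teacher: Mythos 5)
Your proposal is correct and is essentially identical to the paper's own proof: the lower bound comes from \propref{relcatstable} applied with the hypothesis $\relcat(\kappa_Y) = \relcat(\iota_X)+1$, the upper bound from \propref{majorationhcat}, and the two squeeze to give equality. Your remark that the corollary implicitly assumes the square \ref{relcof} is a homotopy pushout (otherwise \propref{relcatstable} does not apply) is a fair observation about the statement, and the paper's proof indeed relies on that reading.
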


\begin{proof}By \propref{relcatstable}, the hypothesis implies that $\hcat(f) \geq \relcat(\iota_X) + 1$. But by  \propref{majorationhcat}, we have $\hcat(f) \leq \relcat(\iota_X) + 1$. So we have the equality.
\end{proof}

It is now easy to exhibit examples of maps $f$ with $\hcat(f) = \relcat(\iota_X) + 1$. Indeed there are plenty examples of homotopy pushouts where $\relcat(\kappa_Y) = \relcat(\iota_X) + 1$:

\begin{example}Let $A = B = *$ and $f\colon S^r\to S^n$ be any of the Hopf maps $S^3 \to S^2$, $S^7 \to S^4$ or $S^{15}\to S^8$. 
So here $\relcat(\iota_X) = \cat(S^n) = 1$. On the other hand it is well known that those maps have a homotopy cofibre $S^n/S^r$ of category 2, so here $\relcat(\kappa_Y) = \cat(S^n/S^r) =2$. By \cororef{relcatinstable}, we have $\hcat(f) = 2$.
\end{example}

\begin{example}Let $f$ be the map $u$ in the homotopy cofibration 
$$\xymatrix{Z \bowtie Z\ar[r]_(.45)u& \Sigma Z\vee \Sigma Z \ar[r]_{t_1}& \Sigma Z \times \Sigma Z}$$
where $Z \bowtie Z \simeq \Sigma (Z \wedge Z)$ is the join of two copies of $Z$
and is also the suspension of the smash product of two copies of $Z$. 
Let $A = B = *$, $\Sigma Z \not\simeq *$.
We have $\relcat(\iota_X) = \cat(\Sigma Z\vee \Sigma Z) = 1$ and $\relcat(\kappa_Y) = \cat(\Sigma Z\times \Sigma Z) = 2$, so by \cororef{relcatinstable} again, we have $\hcat(u) = 2$.
\end{example}

\begin{example}
For  $i \geq 1$, let $f$ be the map $\beta_i$ in the Ganea construction:
$$\xymatrix{
A\ar[r]^{\theta_i}\ar[rd]_{\alpha_i}&F_i\ar[r]^{\eta_i}\ar[d]^{\beta_i}&A\ar[d]^{\alpha_{i+1}}\\
&G_i\ar[r]_{\gamma_i}&G_{i+1} 
}$$

Actually $F_i$ is a join over $A$ of $\ipu$ copies of $F_0$, and also a relative suspension $\Sigma_A W$ where $W$ is a relative smash product. 
%where $F_i \simeq F_{i-1} \bowtie_A F_0 \simeq \Sigma_A (F_{i-1} \wedge_A F_0)$, see \cite{DoeHa13-2}, Proposition 2.
For any $i \leq \relcat(\iota_X)$, we have $\relcat(\alpha_i) = i$, see \cite{DoeHa13}, Proposition 23.
So by \cororef{relcatinstable} again, if $i < \relcat(\iota_X)$, we have $\hcat(\beta_i)  = \relcat(\alpha_i) + 1 = i+1$.
\end{example}

\section{The Strong Hopf category}\label{strong}

In \cite{DoeHa13}, we introduced the strong version of $\relcat$, namely $\Relcat$.
In this section, we introduce the strong version of $\hcat$, namely $\Hcat$. This gives an alternative way, sometimes usefull, to see if a map has a Hopf category less or equal to $n$. Also this will lead to a new inequality: $\hcat(f) \geq \relcat(f)$. Consequently, if $\relcat(f) > \relcat(\iota_X)$, then $\hcat(f) = \relcat(\iota_X) + 1$.

\begin{definition}The \emph{strong Hopf category of a  map $f\colon \Sigma_AW \to X$} is the least integer $n \geq 1$ such that:
\begin{itemize}
\item there are maps $\iota_0\colon A \to X_0$ and a homotopy inverse $\lambda\colon X_0\to A$, i.e.  $\iota_0
  \circ \lambda \simeq \id_{X_0}$ and $\lambda \circ \iota_0 \simeq \id_A$; 
\item for each $i$, $0\leq i < n$, there is a homotopy commutative cube:
\[
\begin{split}
\xymatrix@R=1pc@C=1pc{
&W\ar[rr]^\beta\ar[ld]_\eta\ar[dd]|\hole&&A\ar[ld]\ar[dd]^{\iota_i}\\
A\ar[rr]\ar@{=}[dd]&&\Sigma_AW\ar[dd]^(.3){\zeta_{i+1}}&\\
&Z_i\ar[dl]\ar[rr]^(.3){z_i}|!{[ur];[dr]}\hole&&X_i\ar[dl]^{\chi_i}\\
A\ar[rr]_{\iota_{i+1}}&&X_{i+1}
}\end{split}\label{Hcat}\tag{$\natural$}
\]
where the bottom square is a homotopy pushout.
\item $X_n=X$ and   $\zeta_n \simeq f$.
\end{itemize}
\end{definition}

We denote this integer by $\Hcat(f)$.

\smallskip
Notice that $\iota_{i+1} \simeq \zeta_{i+1} \circ \theta \simeq \chi_i \circ \iota_i$. In particular, this means that $\Pushcat(\iota_i) \leq i$ in the sense of \cite{DoeHa13}, Definition 3.

\smallskip
For $0 \leq i \leq n$, define the sequence of maps $\xi_i \colon X_i \to X$  with the relation $\xi_{i} = \xi_{i+1} \circ \chi_{i}$ (when $i < n$), starting with $\xi_n = \id_X$. 
We have $\xi_n \circ \iota_n \simeq \iota_X$ and $\xi_{i}\circ \iota_{i} = \xi_{i+1} \circ \chi_{i} \circ \iota_{i} \simeq \xi_{i+1} \circ
\iota_{i+1} \simeq \iota_X$ by decreasing induction. Also $\iota_X \circ \lambda \simeq \xi_0 \circ \iota_0 \circ \lambda \simeq \xi_0$.
Moreover, for $0 < i \leq n$ we have we have $\xi_i \circ \zeta_i \simeq f$ by the `Whisker maps inside a cube lemma'. 
So we have the following homotopy diagram:
$$\xymatrix@R=1pc@C=1pc{
&W\ar[rr]^\eta\ar[ld]_\beta\ar[dd]|\hole&&A\ar[ld]\ar@{=}[dd]|\hole\ar[rd]^\theta\\
A\ar[rr]\ar[dd]_{\iota_i}&&\Sigma_AW\ar[dd]^(.3){\zeta_{i+1}}\ar@{=}[rr]&&\Sigma_AW\ar[dd]^f\\
&Z_i\ar[dl]\ar[rr]|!{[ur];[dr]}\hole&&A\ar[dl]^{\iota_{i+1}}\ar[rd]^(.4){\iota_X}\\
X_i\ar[rr]_{\chi_{i}}&&X_{i+1}\ar[rr]_{\xi_{i+1}}&&X
}$$

\smallskip
We say that a map $g\colon B \to Y$ is `relatively dominated' by a map $f\colon B \to X$ if there is a map $\varphi\colon X\to Y$ with a homotopy section $\sigma\colon Y \to X$ such that $\varphi\circ f \simeq g$ and $\sigma \circ g \simeq f$.

\begin{proposition}
A map $g\colon \Sigma_AW \to Y$ has $\hcat(g) \leq n$ iff $g$ est relatively dominated by a map $f \colon \Sigma_AW \to X$ with $\Hcat(f) \leq n$.
\end{proposition}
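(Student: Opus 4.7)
The plan is to prove the two implications separately.

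For the direction ($\Rightarrow$), assume $\hcat(g)\le n$, witnessed by a homotopy section $\tau\colon Y\to G_n(\iota_Y)$ of $g_n$ satisfying $\tau\circ g\simeq\omega_n$. I would take $X := G_n(\iota_Y)$, $f := \omega_n\colon \Sigma_AW\to G_n(\iota_Y)$, $\varphi := g_n$, and $\sigma := \tau$. The identities $\varphi\circ f\simeq g$, $\sigma\circ g\simeq f$, and $\varphi\circ\sigma\simeq\id_Y$ make $g$ relatively dominated by $f$. To exhibit $\Hcat(f)\le n$, I use the Ganea tower of $\iota_Y$ itself as the strong-Hopf data, setting $X_i := G_i(\iota_Y)$, $\iota_i := \alpha_i$, $\chi_i := \gamma_i$, $Z_i := F_i(\iota_Y)$, $z_i := \beta_i$, $\zeta_i := \omega_i$, together with $\iota_0 = \lambda = \id_A$. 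The cube required at stage $i$ in the definition of $\Hcat$ is precisely the subcube of the Ganea-style diagram~(\dag).

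For the direction ($\Leftarrow$), assume $g$ is relatively dominated by $f\colon \Sigma_AW\to X$ with $\Hcat(f)\le n$, via $\varphi\colon X\to Y$ and its homotopy section $\sigma\colon Y\to X$; note $\iota_Y\simeq\varphi\circ\iota_X$. The core of the argument is an induction on $i$, $0\le i\le n$, producing maps $\psi_i\colon X_i\to G_i(\iota_Y)$ satisfying (a) $g_i\circ\psi_i\simeq\varphi\circ\xi_i$, (b) $\psi_i\circ\iota_i\simeq\alpha_i$, and (c) (for $i\ge 1$) $\psi_i\circ\zeta_i\simeq\omega_i$, where the Ganea data on the right-hand side refer to $\iota_Y$. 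The base case is $\psi_0 := \lambda$. For the inductive step I use the pushout $X_{i+1} = A\cup_{Z_i}X_i$ coming from the bottom face of the strong-Hopf cube: condition~(a) together with the commutativity of that cube yields a homotopy $g_i\circ\psi_i\circ z_i\simeq\iota_Y\circ(Z_i\to A)$, which lifts the pair $(Z_i\to A,\,\psi_i\circ z_i)$ to a map $Z_i\to F_i(\iota_Y)$ through the pullback; the pushout description $G_{i+1}(\iota_Y) = A\cup_{F_i(\iota_Y)}G_i(\iota_Y)$ then glues $\alpha_{i+1}$ and $\gamma_i\circ\psi_i$ into the desired $\psi_{i+1}$.

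With $\psi_n$ in hand, I set $\tau := \psi_n\circ\sigma\colon Y\to G_n(\iota_Y)$. Since $\xi_n = \id_X$, condition~(a) at $i=n$ gives $g_n\circ\tau\simeq\varphi\circ\sigma\simeq\id_Y$, so $\tau$ is a section of $g_n$; condition~(c) at $i=n$ gives $\tau\circ g\simeq\psi_n\circ f = \psi_n\circ\zeta_n\simeq\omega_n$, so $\hcat(g)\le n$.

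The hardest step will be verifying condition~(c) in the inductive construction, i.e.\ $\psi_{i+1}\circ\zeta_{i+1}\simeq\omega_{i+1}$. Both sides are whisker maps out of the pushout $\Sigma_AW$, and matching them requires a cube-chasing argument via the `Whisker maps inside a cube' lemma of \cite{DoeHa13}: one overlays the strong-Hopf cube for $f$ (which defines $\zeta_{i+1}$) with the $(i+1)$-th Ganea cube for $\iota_Y$ (which defines $\omega_{i+1}$), and reduces the claim to agreement of the two whisker maps after restriction along $\theta$, namely $\psi_{i+1}\circ\iota_{i+1}\simeq\alpha_{i+1}$, which is condition~(b) at stage $i+1$.
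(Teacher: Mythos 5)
Your proof is correct and follows essentially the same route as the paper: in the forward direction you dominate $g$ by $\omega_n$ and certify $\Hcat(\omega_n)\le n$ by taking the Ganea tower of $\iota_Y$ itself as the strong-Hopf data, and in the reverse direction you run the same induction producing $\psi_i\colon X_i\to G_i(\iota_Y)$ (the paper's $\lambda_i$) with $g_i\circ\psi_i\simeq\varphi\circ\xi_i$, concluding with $\tau=\psi_n\circ\sigma$. Your only addition is carrying conditions (b) and (c) through the induction so as to justify the final homotopy $\psi_n\circ\zeta_n\simeq\omega_n$, a step the paper asserts without further comment.
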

\begin{proof}
Consider the map $\omega_n\colon \Sigma_A W \to G_n(\iota_Y)$ as in diagram \ref{diag1} and notice that $\Hcat(\omega_n) \leq n$.
If  $\hcat(f)\leq n$, then $f$ is relatively dominated by $\omega_n$.

\smallskip
For the reverse direction, by hypothesis, we have a map $\varphi$ and a homotopy section $\sigma$  such that
$\varphi\circ f \simeq g$ and $\sigma \circ g \simeq f$; composing with $\theta$, we have also $\varphi \circ \iota_X \simeq \iota_Y$ and $\sigma \circ \iota_Y \simeq \iota_X$.
 From the hypothesis $\Hcat(f)\leq n$, we get a sequence of homotopy commutative  diagrams, for $0\leq i < n$, 
which gives the top part of the following diagram.
 
We show by induction that the map
$\varphi\circ \xi_i\colon X_i\to Y$ factors through $g_i\colon G_i(\iota_Y)\to Y$ up to homotopy.
This is true for $i=0$ since we have $\xi_0 \simeq \iota_X \circ \lambda$,
so $\varphi\circ \xi_0 \simeq \varphi\circ\iota_X \circ \lambda \simeq \iota_Y
\circ \lambda = g_0 \circ \lambda$.
Suppose now that we have a map $\lambda_{i}\colon X_{i}\to G_{i}(\iota_Y)$
such that $g_{i}\circ \lambda_{i}\simeq \varphi\circ \xi_{i}$. Then we construct a homotopy commutative diagram
$$\xymatrix@C=1pc@R=1pc{
&Z_{i}\ar[ld]_{z_i}\ar[rr]\ar@{-->}[dd]|!{[ld];[rd]}\hole
&&A\ar[ld]_{\iota_{i+1}}\ar[rd]\ar@{=}[dd]|\hole&\\
X_{i}\ar[rr]\ar[dd]_{\lambda_{i}}&&
X_{i+1}\ar@{..>}[dd]_(.3){\lambda_{i+1}}\ar[rr]_(.34){\xi_{i+1}}&&X\ar[dd]^{\varphi}\\
&F_{i}\ar[rr]|(.5)\hole   %&F_{i}\ar@{->}[rr]|!{[rd];[ur]}\hole
\ar[ld]&&A\ar[ld]^{\alpha_{i+1}}\ar[rd]&\\
G_{i}(\iota_Y)\ar[rr]&&G_{i+1}(\iota_Y)\ar[rr]_{g_{i+1}}&&Y
}$$
where $\xymatrix@1{Z_{i}\ar@{-->}[r]& F_{i}}$ is  the whisker map induced by the bottom homotopy pullback and 
$\lambda_{i+1}\colon \xymatrix@1{X_{i+1}\ar@{.>}[r]&  G_{i+1}(\iota_Y)}$ is the whisker map induced by the top homotopy pushout.
The composite $g_{i+1}\circ \lambda_{i+1}$ is homotopic to $\varphi\circ \xi_{i+1}$.
Hence the inductive step is proven.

At the end of the induction, we have $g_n\circ \lambda_n\simeq \varphi\circ \xi_n = \varphi \circ \id_X = \varphi$. As we have a homotopy section $\sigma\colon Y\to X_n=X$ of $\varphi$, we get a homotopy section $\lambda_n\circ \sigma$ of $g_n$.
Moreover, we have $(\lambda_n\circ \sigma) \circ g \simeq \lambda_n \circ f \simeq \lambda_n \circ \zeta_n \simeq \omega_n$. 
\end{proof}

\begin{example}If we consider any relative suspension $\Sigma_A f \colon \Sigma_A W \to \Sigma_A Z$ (and in particular, of course, when $A = *$, any suspension $\Sigma f \colon \Sigma W \to \Sigma Z$), we have $\Hcat(\Sigma_A f) = 1$. And so, any map g that is relatively dominated by a (relative) suspension has $\hcat(g) = 1$.
\end{example}

In fact, by definition, a map $g$ has $\Hcat(g) = 1$ if and only if $g$ is a (relative) suspension. There are maps for which the  strong Hopf category is greater than the Hopf category: For instance, consider the null map $f\colon * \to X$ of \examref{flechenulle}; if $X$ is a space with $\cat(X) = 1$ that is not a suspension, then $f$ cannot be a suspension, so $\Hcat(f) > \hcat(f) = 1$.

\begin{proposition}\label{Hcatzeta}
In the diagram \ref{Hcat}, we have $$\Relcat(\zeta_i) \leq i$$
\end{proposition}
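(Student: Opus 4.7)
The plan is to build the tower of pushouts witnessing $\Relcat(\zeta_i)\leq i$ by reusing the first $i$ cubes already present in the given $\Hcat(f)$ structure. The key compatibility, which comes for free from the ``whisker maps inside a cube'' lemma of \cite{DoeHa13}, is that $\zeta_{j+1}\simeq \chi_j\circ\zeta_j$ for every $0\leq j<n$ (both sides agree after precomposition with each leg of the pushout $\Sigma_A W$, since $\zeta_{j+1}\circ\theta\simeq\iota_{j+1}\simeq\chi_j\circ\iota_j\simeq\chi_j\circ\zeta_j\circ\theta$). Thus the maps $\zeta_1,\ldots,\zeta_i$ and the $\chi_j$'s organize the spaces $X_0,\ldots,X_i$ into a compatible tower under $\Sigma_A W$.

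I would proceed by induction on $i$. For the base case $i=1$, the cube at level $j=0$ directly exhibits $X_1$ as a homotopy pushout, with $\zeta_1$ the induced whisker map and $\iota_0\colon A\to X_0$ a homotopy equivalence with inverse $\lambda$; this is precisely the data of a strong relative category structure of length one for $\zeta_1$. For the inductive step, assume a strong relcat structure of length $j$ for $\zeta_j$ has already been extracted from the first $j$ cubes. I would extend it by appending the cube at level $j$: its bottom face gives the homotopy pushout exhibiting $X_{j+1}\simeq X_j\cup_A Z_j$, and the identity $\zeta_{j+1}\simeq \chi_j\circ\zeta_j$ guarantees that the new pushout stacks compatibly on top of the existing tower, upgrading it to a strong relcat structure of length $j+1$ for $\zeta_{j+1}$.

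The hard part will be checking that the data extracted from the cubes meets the precise formal definition of $\Relcat$ from \cite{DoeHa13}: the pushouts appearing in the cubes are indexed by $A$, whereas the source of $\zeta_i$ is $\Sigma_A W$. To bridge this gap, at each step I would use the factorization $\iota_j\simeq\zeta_j\circ\theta$ together with pushout pasting to reinterpret each $A$-indexed pushout as a pushout along $\Sigma_A W$, rewriting $X_{j+1}=X_j\cup_A Z_j$ as the pushout of $X_j\xleftarrow{\zeta_j}\Sigma_A W\to (Z_j\cup_A \Sigma_A W)$. Once this repackaging is in place the inductive construction closes, and the resulting tower is the strong relcat witness for $\zeta_i$.
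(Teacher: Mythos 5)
There is a genuine gap, and it sits at the first step of your construction. Your induction rests on the identity $\zeta_{j+1}\simeq\chi_j\circ\zeta_j$, which you justify by observing that the two maps agree, up to homotopy, after precomposition with each leg $\theta$ of the homotopy pushout $\Sigma_A W$. But two maps out of a homotopy pushout that agree on the legs need not be homotopic: one needs the two homotopies themselves to be compatible, i.e.\ both maps must be whisker maps for the \emph{same} pair of homotopies. Here $\zeta_{j+1}$ and $\zeta_j$ come from two different cubes of the $\Hcat$ data, which the definition only requires to share $X_j$ and $\iota_j$; neither $\zeta_{j+1}$ nor $\chi_j\circ\zeta_j$ is presented as the whisker map of a common diagram, so the ``whisker maps inside a cube'' lemma does not apply. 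The failure of exactly this kind of coincidence is what the whole paper is about: $\sigma\circ f$ and $\omega_n$ also agree on $A$ without being homotopic in general. Without the identity, your induction bounds $\Relcat$ of the composite $\chi_{i-1}\circ\dots\circ\chi_1\circ\zeta_1$, which need not be the map $\zeta_i$ of the statement. A secondary problem is your re-indexing step: $Z_j\cup_A\Sigma_AW$ is not defined as written (there is no map $A\to Z_j$), and the claim that $X_{j+1}$ is a pushout under $\Sigma_AW$ of the shape you describe is asserted rather than checked.

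The paper's proof avoids both difficulties by not inducting on the $\zeta_j$ at all. It uses only the single cube at level $i-1$: pasting pushouts, it forms $C_{i-1}=A\cup_W Z_{i-1}$ with whisker map $c_{i-1}=(\iota_{i-1},z_{i-1})\colon C_{i-1}\to X_{i-1}$, and exhibits $\zeta_i$ as the homotopy pushout of $c_{i-1}$ along $C_{i-1}\to\Sigma_AW$ (this is the correct version of your re-indexing, applied once rather than iterated). It then bounds $\secat(c_{i-1})\leq\secat(\iota_{i-1})\leq\Pushcat(\iota_{i-1})\leq i-1$ --- note that this uses only the relations $\iota_{j+1}\simeq\chi_j\circ\iota_j$, which \emph{are} part of the cube data since $A\to A$ is an identity, unlike the relation you need on $\Sigma_AW$ --- hence $\Relcat(c_{i-1})\leq i$ by the up-to-one comparison of \cite{DoeHa13}, Theorem 18, and finally $\Relcat(\zeta_i)\leq i$ because $\Relcat$ does not increase under homotopy pushout (\cite{DoeHa13}, Lemma 11). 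If you want to repair your argument, that last lemma applied to the single pushout above is the substitute for your tower.
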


As $\omega_i$ is a particular case of $\zeta_i$, this implies $\Relcat(\omega_i) \leq i$.

\begin{proof}For $i > 0$, let build the following homotopy diagram where the three squares are homotopy pushouts:
\[
\xymatrix{
W \ar[r]\ar[d]_\beta\ar@/^{1pc}/[rr]^\eta &Z_\imu \ar[r]\ar[d]\ar@/^{1.5pc}/[dd]^(0.3){z_\imu} &A\ar[d]_\theta\ar@/^{1.5pc}/[dd]^{\iota_i}\\
A \ar[r]\ar@/_/[rd]_{\iota_\imu} &C_{i-1} \ar[r]\ar[d]_{c_\imu}&\Sigma_A W\ar[d]_{\zeta_i}\\
&X_\imu\ar[r]&X_i
}
\]
 and where the map $c_{i-1} = (\iota_{i-1},z_{i-1})$ is the whisker map induced by the homotopy pushout.

We have $\secat(\iota_\imu) \leq \Pushcat(\iota_\imu) \leq \imu$ by \cite{DoeHa13}, Theorem 18. 
So $\secat(c_\imu) \leq \imu$ by \cite{DoeHa13}, Proposition 29.
So $\Relcat(c_\imu) \leq (i-1) + 1 = i$ by \cite{DoeHa13}, Theorem 18.
And this implies $\Relcat(\zeta_i) \leq i$ by \cite{DoeHa13}, Lemma 11.
\end{proof}

\begin{theorem}\label{relcatpphcat}
For any $f\colon \Sigma_A W\to X$, we have
$$\Relcat(f) \leq \Hcat(f) \quad \mbox{and} \quad \relcat(f) \leq \hcat(f)$$
\end{theorem}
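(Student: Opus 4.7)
The plan is to prove the two inequalities separately.

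For the first inequality $\Relcat(f)\leq\Hcat(f)$, I would simply apply \propref{Hcatzeta} with $i=n=\Hcat(f)$. The defining diagram for $\Hcat(f)\leq n$ provides $\zeta_n\simeq f$, so that $\Relcat(f)=\Relcat(\zeta_n)\leq n$.

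For the second inequality $\relcat(f)\leq\hcat(f)$, I would fix $n=\hcat(f)$ together with a section $\sigma\colon X\to G_n(\iota_X)$ of $g_n(\iota_X)$ satisfying $\sigma\circ f\simeq\omega_n$. Since $\iota_X\simeq f\circ\theta$ is a factorisation and $\theta\colon A=G_0(\iota_X)\to\Sigma_A W=G_0(f)$ commutes with the maps to $X$, this initial map should extend inductively to a comparison map $\Theta\colon G_n(\iota_X)\to G_n(f)$ between the two Ganea towers: at each step the universal property of the pullback produces $F_i(\iota_X)\to F_i(f)$, and the universal property of the pushout then produces $G_{\ipu}(\iota_X)\to G_{\ipu}(f)$. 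The composite $\tau=\Theta\circ\sigma\colon X\to G_n(f)$ should then be the required section: $g_n(f)\circ\tau\simeq\id_X$ follows from $g_n(f)\circ\Theta\simeq g_n(\iota_X)$, and $\tau\circ f\simeq\alpha_n^f$ (writing $\alpha_n^f$ for the corresponding map in the Ganea construction of $f$) follows from $\Theta\circ\omega_n\simeq\alpha_n^f$.

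The main obstacle is verifying this last identity: it should follow from the universal property of the pushout $\Sigma_A W$, since both $\Theta\circ\omega_n$ and $\alpha_n^f$ are whisker maps $\Sigma_A W\to G_n(f)$ that restrict to $\alpha_n^f\circ\theta$ along each copy of $\theta$; but, as with the inductive construction of $\Theta$, making this precise within the Mather framework of the paper requires care. Should this route prove awkward, an alternative is to apply the first inequality to $\omega_n$, which satisfies $\Hcat(\omega_n)\leq n$, obtaining $\relcat(\omega_n)\leq n$, and then to derive $\relcat(f)\leq\relcat(\omega_n)$ from the fact that $f$ is relatively dominated by $\omega_n$ (via the characterisation of $\hcat$ by relative domination given by the preceding proposition), using a monotonicity lemma for $\relcat$ under relative domination.
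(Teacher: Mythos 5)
Your first inequality is exactly the paper's argument: take $n=\Hcat(f)$, note $f\simeq\zeta_n$, and apply \propref{Hcatzeta}. For the second inequality your primary route (building a comparison map $\Theta\colon G_n(\iota_X)\to G_n(f)$ between the two Ganea towers and transporting the section) is genuinely different from the paper's, and the obstacle you flag at the end is a real one: in the Mather framework a map out of a homotopy pushout is not determined up to homotopy by the homotopy classes of its restrictions to the two legs --- the whisker map depends on a choice of compatible homotopy over $W$ --- so concluding $\Theta\circ\omega_n\simeq\alpha_n^f$ from "both restrict to $\alpha_n^f\circ\theta$" is not automatic and would require carrying coherent cube data through the whole induction, comparable to the bookkeeping in the splitting proposition of Section~\ref{cofibre}. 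As written, then, the primary route has a gap at precisely the step that matters. However, the alternative you name in your last sentence is exactly the paper's proof: since $\hcat(f)=n$, the relative-domination characterisation shows $f$ is relatively dominated by $\omega_n$; the remark after \propref{Hcatzeta} gives $\Relcat(\omega_n)\leq n$; and the monotonicity statement you ask for is supplied by Proposition~10 of the cited reference, yielding $\relcat(f)\leq n$ directly. So your fallback closes the argument cleanly and coincides with the paper's route; I would promote it to the main proof and drop (or substantially expand) the tower-comparison sketch.
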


\begin{proof}If $\Hcat(f) = n$, then we have $f\simeq \zeta_n$ in \ref{Hcat}.  So $\Relcat(f) = \Relcat(\zeta_n) \leq n$ by \propref{Hcatzeta}.

If $\hcat(f) = n$, then $f$ is relatively dominated by $\omega_n$. As $\Relcat(\omega_n) \leq n$, we have $\relcat(f) \leq  n$ by \cite{DoeHa13}, Proposition 10.
\end{proof}

As a corollary, we get an indirect proof of \propref{relcatstable} because $\relcat(\kappa_Y) \leq \relcat(f)$ by \cite{DoeHa13}, Lemma 11, that asserts that a homotopy pushout doesn't increase the relative category.

\smallskip
It is not difficult to find an example where these inequalities are strict:

\begin{example}\label{t1}Let $f$ be the map $t_1$ in the homotopy cofibration 
$$\xymatrix{Z \bowtie Z\ar[r]_(.45)u& \Sigma Z\vee \Sigma Z \ar[r]_{t_1}& \Sigma Z \times \Sigma Z}$$
Let $A = *$, $\Sigma Z \not\simeq *$.
As $t_1$ is a homotopy cofibre, we have $\relcat(t_1) \leq \Relcat(t_1) \leq 1$, see \cite{DoeHa13}, Proposition 9.
On the other hand, we have $\Hcat(t_1) \geq \hcat(t_1) \geq \relcat(\iota_X) = \cat(\Sigma Z\times \Sigma Z) = 2$ by \propref{majorationhcat}.
\end{example}
%%%%%%%%%%%%%%%%%%%%%%%%%%%%%%%%%%%%%%%%%%%%%%%%

\section{Equivalent conditions to get the Hopf category}\label{sigmabars}

Let be given any map $f\colon \Sigma_A W \to X$ with $\secat(\iota_X) \leq n$ and any homotopy section $\sigma\colon X \to G_n$ of $g_n\colon G_n\to X$.  Consider the following homotopy pullbacks:
$$\xymatrix{
Q\ar[d]_{\pi'}\ar[r]^\pi&\Sigma_A W\ar[d]_{\theta_n^W}\ar@{=}[rd]\\
\Sigma_A W\ar[r]_{\bar \sigma}\ar[d]_{f}&H_n\ar[r]_{\eta_n^W}\ar[d]_{f_n}&\Sigma_A W\ar[d]^{f}\\
X\ar[r]_\sigma&G_n\ar[r]_{g_n}&X
}$$
where  $\theta_n^W = (\omega_n, \id_{\Sigma_A X})$ is the whisker map induced by the homotopy pullback $H_n$. By the `Prism lemma' (see \cite{DoeHa13}, Lemma 46, for instance ), we know that the homotopy pullback of $\sigma$ and $f_n$ is indeed $\Sigma_A W$, and that $\eta_n^W \circ \bar\sigma \simeq \id_{\Sigma_A W}$.
%Since $\beta_n \circ \theta_n \simeq \alpha_n$, we also know that $\iota_X \circ \pi' \simeq \iota_X'$. 
Also notice that $\pi \simeq \pi'$ since $\pi \simeq \eta_n^W \circ \theta_n^W \circ \pi \simeq \eta_n^W \circ \bar\sigma \circ \pi' \simeq \pi'$.

\begin{proposition}\label{sigmabar}Let be given any map $f\colon \Sigma_A W \to X$ with $\secat(\iota_X) \leq n$ and any homotopy section $\sigma\colon X \to G_n(\iota_X)$ of $g_n\colon G_n(\iota_X)\to X$. With the same definitions and notations as above, the following conditions are equivalent:
\begin{enumerate}
\item $\sigma \circ f \simeq \omega_n$.
\item $\pi$ has a homotopy section.
\item $\pi$ is a homotopy epimorphism.
\item $\theta_n^W \simeq \bar\sigma$.
\end{enumerate}
\end{proposition}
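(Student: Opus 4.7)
The plan is to close a cycle, using condition (4) as the hub: I would establish $(1)\Leftrightarrow(4)$, $(4)\Leftrightarrow(2)$, $(2)\Rightarrow(3)$, and $(3)\Rightarrow(4)$.

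First I would treat $(1)\Leftrightarrow(4)$ via the universal property of the homotopy pullback $H_n$. Both $\theta_n^W$ and $\bar\sigma$ are whisker maps into $H_n$; by construction of these whisker maps, $\eta_n^W\circ\theta_n^W\simeq\id_{\Sigma_A W}\simeq\eta_n^W\circ\bar\sigma$, $f_n\circ\theta_n^W\simeq\omega_n$, and $f_n\circ\bar\sigma\simeq\sigma\circ f$. The direction $(4)\Rightarrow(1)$ is immediate by post-composing $\theta_n^W\simeq\bar\sigma$ with $f_n$. For $(1)\Rightarrow(4)$, the hypothesis $\omega_n\simeq\sigma\circ f$ together with the trivial homotopy $\id\simeq\id$ supplies compatible data for a map into $H_n$ (both descend to the constant homotopy on $f$ in $X$), and the uniqueness up to homotopy of maps into the homotopy pullback yields $\theta_n^W\simeq\bar\sigma$.

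Next, for $(4)\Leftrightarrow(2)$, I would exploit the pullback square defining $Q$. Given $(4)$, the pair $(\id_{\Sigma_A W},\id_{\Sigma_A W})$ together with the homotopy $\theta_n^W\simeq\bar\sigma$ yields, by the universal property of the homotopy pullback $Q$, a map $s\colon\Sigma_A W\to Q$ with $\pi\circ s\simeq\id$. Conversely, from a homotopy section $s$ of $\pi$, the identification $\pi\simeq\pi'$ gives $\pi'\circ s\simeq\id$ as well, and the pullback commutation $\theta_n^W\circ\pi\simeq\bar\sigma\circ\pi'$ then yields $\theta_n^W\simeq\theta_n^W\circ\pi\circ s\simeq\bar\sigma\circ\pi'\circ s\simeq\bar\sigma$. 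The implication $(2)\Rightarrow(3)$ is the standard fact that a homotopy retraction is a homotopy epimorphism. Finally, $(3)\Rightarrow(4)$ follows by combining $\theta_n^W\circ\pi\simeq\bar\sigma\circ\pi'$ with $\pi\simeq\pi'$ to obtain $\theta_n^W\circ\pi\simeq\bar\sigma\circ\pi$, after which the homotopy-epimorphism property of $\pi$ lets us cancel.

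The main obstacle I expect is the implication $(1)\Rightarrow(4)$: one must invoke uniqueness of lifts into a homotopy pullback, which requires verifying that the two homotopy-coherent data sets for $\theta_n^W$ and $\bar\sigma$ agree as such, not merely at the level of underlying homotopies. The verification is straightforward here because both systems project to the constant homotopy on $f\colon\Sigma_A W\to X$ (using $g_n\circ\omega_n\simeq f$ on one side and $g_n\circ\sigma\simeq\id_X$ on the other), but it is the one step where a naive appeal to "compose and read off projections" needs to be replaced by a careful use of the pullback's universal property.
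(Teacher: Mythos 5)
Your implications $(4)\Rightarrow(1)$, $(4)\Rightarrow(2)$, $(2)\Rightarrow(3)$ and $(3)\Rightarrow(4)$ are all sound and agree in substance with the paper's proof (in particular $(3)\Rightarrow(4)$ uses, exactly as the paper does, the identity $\pi\simeq\pi'$ established just before the statement). The problem is $(1)\Rightarrow(4)$, which is the only arrow you draw out of condition (1), so the whole equivalence hangs on it. There is no ``uniqueness up to homotopy of maps into the homotopy pullback'' of the kind you invoke: two maps $u,v\colon Z\to P$ into a homotopy pullback whose two projections are separately homotopic need not be homotopic --- the discrepancy between the two filling homotopies is an obstruction living in $[Z,\Omega(\mathrm{base})]$ (take $P=\Omega D$ over $B=C=\ast$ for the standard counterexample). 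Here the two filling homotopies are the homotopy $g_n\circ\omega_n\simeq f$ coming from diagram (\dag) and the homotopy $(g_n\circ\sigma)\circ f\simeq f$ coming from the section $\sigma$; neither is ``the constant homotopy on $f$'', and you give no argument that the resulting loop of self-homotopies of $f$ is trivial. You flag exactly this issue in your closing paragraph, but then dismiss it without justification, so the step remains a genuine gap.

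The paper's route around this is to prove $(1)\Rightarrow(2)$ instead: pasting the square defining $Q$ with the Prism-lemma square exhibits $Q$ as the homotopy pullback of $\sigma$ and $\omega_n\simeq f_n\circ\theta_n^W$ over $G_n$, so the hypothesis $\sigma\circ f\simeq\omega_n$ supplies a whisker map $(f,\id_{\Sigma_A W})\colon\Sigma_A W\to Q$, which is a homotopy section of $\pi$. This uses only the \emph{existence} of a whisker map for given compatible data, never uniqueness, and the cycle $(1)\Rightarrow(2)\Rightarrow(3)\Rightarrow(4)\Rightarrow(1)$ then closes. Replacing your $(1)\Rightarrow(4)$ by this $(1)\Rightarrow(2)$ repairs the argument; the rest of what you wrote can stay.
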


\begin{proof}We have the following sequence of implications:

(i) $\implies$ (ii):  Since $\sigma \circ f \simeq  \omega_n  \simeq f_n \circ \theta_n^W \circ \id_{\Sigma_A W}$, we have a whisker map $(f,\id_{\Sigma_A W})\colon {\Sigma_A W}\to Q$ induced by the homotopy pullback $Q$ which is a homotopy section of $\pi$.

(ii) $\implies$ (iii): Obvious.

(iii) $\implies$ (iv): We have $\theta_n^W \circ \pi \simeq  \bar\sigma \circ \pi$ since $\pi \simeq \pi'$. Thus  $\theta_n^W \simeq \bar\sigma$ since $\pi$ is a homotopy epimorphism.

(iv) $\implies$ (i): We have $\sigma \circ f \simeq f_n \circ  \bar\sigma \simeq f_n \circ \theta_n^W \simeq \omega_n$.
\end{proof}

\begin{theorem}\label{dimension}Let be   a $(q-1)$-connected map $\iota_X\colon A\to X$ with $\secat \iota_X \leq n$. If $\Sigma_A W$ is a CW-complex with $\dim \Sigma_A W < (n + 1)q-1$ then $\sigma \circ f \simeq \omega_n$ for any homotopy section $\sigma$ of $g_n$.\end{theorem}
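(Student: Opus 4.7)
The plan is to verify condition (ii) of Proposition~\ref{sigmabar}, namely that $\pi\colon Q\to\Sigma_A W$ admits a homotopy section; the equivalence with (i) then gives $\sigma\circ f \simeq \omega_n$. Such a section will be built skeletally on the CW complex $\Sigma_A W$ by classical obstruction theory, so the task reduces to controlling the connectivity of the homotopy fibre of $\pi$.

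First I would identify the fibre of $\pi$. Since $Q$ is the homotopy pullback of $\theta_n^W$ and $\bar\sigma$, the map $\pi$ is a base change of $\bar\sigma$, which by the Prism lemma (used just before Proposition~\ref{sigmabar}) is itself a base change of $\sigma\colon X\to G_n$ along $f$. Hence the homotopy fibre of $\pi$ coincides with that of $\sigma$. Since $\sigma$ is a homotopy section of the Ganea fibration $g_n$ with fibre $F_n:=F_n(\iota_X)$, the standard long exact sequence argument (using that $\sigma_*$ is a split monomorphism) gives $\pi_k(\mathrm{hofib}\,\sigma)\cong \pi_{k+1}(F_n)$ for all $k$.

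Next I would control the connectivity of $F_n$. Since $\iota_X$ is $(q-1)$-connected, $F_0=\mathrm{hofib}\,\iota_X$ is $(q-2)$-connected. Using the classical Ganea description of $F_n$ as the $(n+1)$-fold fibrewise join of $F_0$ over $X$, together with $\mathrm{conn}(X*Y)=\mathrm{conn}(X)+\mathrm{conn}(Y)+2$, a straightforward induction yields $\mathrm{conn}(F_n)=(n+1)q-2$, so $\pi_k(\mathrm{hofib}\,\pi)$ vanishes for all $k\leq (n+1)q-3$.

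Finally, the obstruction to extending a partial section of a fibrant replacement of $\pi$ from the $k$-skeleton of $\Sigma_A W$ to the $(k+1)$-skeleton lies in $H^{k+1}(\Sigma_A W;\pi_k(\mathrm{hofib}\,\pi))$: it vanishes for $k\leq (n+1)q-3$ by the connectivity estimate, and for $k+1>\dim\Sigma_A W$ on dimensional grounds. The hypothesis $\dim\Sigma_A W<(n+1)q-1$ is precisely what makes these two ranges together cover every $k\geq 0$, so every obstruction vanishes and $\pi$ admits a section. The subtlest point I anticipate is basepoint bookkeeping in the fibre identification: strictly, the fibre of $\pi$ at $w$ is that of $\bar\sigma$ at $\theta_n^W(w)$ rather than at $\bar\sigma(w)$, but since both lie in the same fibre of $\eta_n^W$, equivalent to the connected space $F_n$, the two homotopy fibres coincide up to equivalence.
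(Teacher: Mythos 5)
Your argument is correct, but it runs through a different exit of \propref{sigmabar} than the paper does. The paper verifies condition (iv): it observes that $g_n$, being the $(n+1)$-fold join of the $(q-1)$-connected map $\iota_X$, is $((n+1)q-1)$-connected, hence so is its base change $\eta_n^W$, and then invokes Whitehead's theorem to get that $[\Sigma_A W,H_n]\to[\Sigma_A W,\Sigma_A W]$ is \emph{injective} in the stated dimension range, forcing the two sections $\theta_n^W$ and $\bar\sigma$ of $\eta_n^W$ to be homotopic. You instead verify condition (ii), constructing a section of $\pi$ by obstruction theory from the connectivity of its homotopy fibre. The connectivity input is the same in both cases (the join computation $(n+1)(q-2)+2n=(n+1)q-2$), and both bounds land exactly on $\dim\Sigma_A W<(n+1)q-1$; the difference is that the paper uses a \emph{uniqueness}-of-lifts statement one pullback earlier (at $H_n$), while you use an \emph{existence}-of-lifts statement at $Q$, paying for it with the extra step of identifying $\mathrm{hofib}\,\pi$ with $\mathrm{hofib}\,\sigma$ and the degree shift $\pi_k(\mathrm{hofib}\,\sigma)\cong\pi_{k+1}(\mathrm{hofib}\,g_n)$ coming from the splitting. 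Two small notational cautions that do not affect the substance: in this paper $F_n$ denotes the homotopy pullback of $g_n$ along $\iota_X$ (a fibre ``over $A$''), not the homotopy fibre of $g_n$, so your connectivity count should be phrased for $\mathrm{hofib}\,g_n$ rather than for $F_n(\iota_X)$; and $\bar\sigma$ is the base change of $\sigma$ along $f_n$, not along $f$. Your handling of the basepoint issue at the end is the right one, since the fibre of $\eta_n^W$ is path-connected in the relevant range.
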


\begin{proof}Recall that $g_i$ is the $(i+1)$-fold join of $\iota_X$. Thus by \cite{Mat76}, Theorem 47, we obtain that, for each $i \geq 0$, $g_i : G_i \to X$ is $(i+1)q-1$-connected. As $g_i$ and $\eta_i^W$ have the same homotopy fibre, the Five lemma implies that $\eta_i^W\colon H_i \to \Sigma_A W$ is $(i+1)q-1$-connected, too. By \cite{Whi78}, Theorem IV.7.16, this means that for every CW-complex $K$ with $\dim K < (i+1)q-1$, $\eta_i^W$ induces a one-to-one correspondence $[K,H_i] \to [K,\Sigma_A W]$. Apply this to $K = \Sigma_A W$ and $i=n$: Since $\theta_n^W$ and $\bar\sigma$ are both homotopy sections of $\eta_n^W$, we obtain $\theta_n^W \simeq \bar\sigma$, and \propref{sigmabar} implies the desired result.
\end{proof}

\begin{example}Let $A = \ast$ and $W = S^{r-1}$, so $\Sigma_A W = S^r$, and $X = S^m$. In this case $\secat \iota_X = \cat S^m = 1$. Hence \thmref{dimension} means that if $r < 2m-1$, we have $\sigma \circ f \simeq \omega_1$, whatever can be $f$ and $\sigma \colon X \to G_1(\iota_X)$, so $\hcat f =1$ and  we get by \propref{relcatstable} that the homotopy cofibre $C$ of $f$ has $\cat C \leq 1$. (Notice that if $r < m$ then $f$ is a nullhomotopic, so $C$ is simply $S^m \vee S^{r+1}$.) 
\end{example}

\begin{example}Let  $A =*$, $\Sigma W \simeq \Sigma(S^{r-1} \vee S^{r-1}) \simeq S^r \vee S^r$, $X \simeq  S^r \times S^r$ and consider $t_1\colon S^r \vee S^r \to S^r \times S^r$. Here $\secat(\iota_X) = \cat(S^r \times S^r) = 2$. For any $r \geq 1$, we have $\dim (S^r \vee S^r) = r < (2+1)r - 1$, so $\hcat(t_1)=2$.
\end{example}

%%%%%%%%%%%%%%%%%%%%%%%%%%%%%
%%%%%%%%%%%%%%%%%%%%%%%%%%%%%
\bibliographystyle{plain}
\bibliography{secat}

\begin{thebibliography}{1}

\bibitem{Doe15}
Jean-Paul Doeraene.
\newblock About sectional category of the ganea maps, 2015.
\newblock arXiv:1507.07752.

\bibitem{DoeHa13}
Jean-Paul Doeraene and Mohammed El~Haouari.
\newblock Up-to-one approximations for sectional category and topological
  complexity.
\newblock {\em Topology and its Appl.}, 160:766--783, 2013.

\bibitem{Iwa98}
Norio Iwase.
\newblock Ganea's conjecture on lusternik-schnirelmann category.
\newblock {\em Bull. Lond. Math. Soc.}, 30:623--634, 1198.

\bibitem{Mat76}
Michael Mather.
\newblock Pull-backs in homotopy theory.
\newblock {\em Canad. Journ. Math.}, 28(2):225--263, 1976.

\bibitem{Whi78}
George~W. Whitehead.
\newblock {\em Elements of homotopy theory}, volume~64 of {\em Graduate texts
  in mathematics}.
\newblock Springer-Verlag, New York, 1978.

\end{thebibliography}

\end{document}